\newtheorem{theorem}{Theorem}[section]
\newtheorem{lemma}[theorem]{Lemma}
\newtheorem{proposition}[theorem]{Proposition}
\newtheorem{corollary}[theorem]{Corollary}
\theoremstyle{definition}
\newtheorem{definition}[theorem]{Definition}
\newtheorem{example}[theorem]{Example}
\theoremstyle{remark}
\newtheorem{remark}[theorem]{Remark}
\numberwithin{equation}{section}
\begin{document}
\setcounter{page}{1}

\title[Generalized injectivity of Banach modules]{Generalized injectivity of Banach modules}

\author[M. Fozouni]{Mohammad Fozouni}

\address{Department of Mathematics,  Gonbad Kavous University, P.O. Box 163, Gonbad-e Kavous, Golestan, Iran.}

\email{\textcolor[rgb]{0.00,0.00,0.84}{fozouni@gonbad.ac.ir}}

\subjclass[2010]{Primary: 46M10, Secondary: 43A20, 46H25.}

\keywords{Banach algebra, injective module, character, $\phi$-injective module, locally compact group.}

%\date{Received: xxxxxx; Revised: yyyyyy; Accepted: zzzzzz.
%\newline \indent $^{*}$ Corresponding author}

\begin{abstract}
In this paper,  we study the notion of $\phi$-injectivity in the special case that $\phi=0$. For an arbitrary locally compact group $G$,  we characterize the 0-injectivity of $L^{1}(G)$ as a left $L^{1}(G)$ module. Also, we show that $L^{1}(G)^{**}$ and $L^{p}(G)$ for $1<p<\infty$  are 0-injective Banach $L^{1}(G)$ modules.
\end{abstract} \maketitle
\noindent
\section{introduction}
The homological properties of Banach modules such as injectivity, projectivity, and flatness was first introduced and investigated by Helemskii; see \cite{Helemskii2, Helemskii}. White in \cite{white} gave a quantitative version of these concepts, i.e., he introduced the concepts of $C$-injective, $C$-projective, and $C$-flat Banach modules for a positive real number $C$. Recently Nasr-Isfahani and Soltani Renani introduce a version of these homological concepts based on a character of a Banach algebra $A$ and they showed that every injective (projective, flat) Banach module is a character injective (character projective, character flat respectively) module but the converse is not valid in general. With use of these new homological concepts, 
they gave a new characterization of $\phi$-amenability of  Banach algebra $A$ such that $\phi\in \Delta(A)$ and a necessary condition for $\phi$-contractibility of $A$; see \cite{Nasr}.

%In this paper, we study the notion of  $\phi$-injectivity in the special case that $\phi=0$.
%In this paper we only focus on injectivity and introduce the concept of 0-injective Banach module. We show that each injective Banach module is an injective module but the converse is not valid in general. Also, we show that this concept is different from character injectivity. 
%%%%%%%%%%%%%%%%%%%%%%%%%%%%%%%%%%%%%%%%%
\section{preliminaries}
Let $A$ be a Banach algebra and $\Delta(A)$ denote the character space of $A$, i.e., the space of all non-zero homomorphisms from $A$ onto $\mathbb{C}$. We denote by \textbf{A-mod} and \textbf{mod-A}  the category of all Banach left $A$-modules and all Banach right $A$-modules respectively.  In the case that $A$ has an identity we denote by $\textbf{A-unmod}$ the category of all Banach left unital modules.   For $E, F\in \textbf{A-mod}$, let $_{A}B(E,F)$ be the space of all bounded linear left $A$-module morphisms from $E$ into $F$.

For each Banach space $E$, $B(A,E)$; the Banach algebra consisting of all bounded linear operator from $A$ into $E$, is in $\textbf{A-mod}$ with the following module action:
\begin{equation*}
(a\cdot T)(b)=T(ba)\quad (T\in B(A,E), a, b\in A).
\end{equation*}

%%%%%%%%%%%%%%%%%%%%%%%%%%%%%%%%%%%%%%%%%%%%%%%%
\begin{definition}\label{dfn: inj mod} Let $A$ be a Banach algebra and $J\in \textbf{A-mod}$. We say that $J$ is injective if  for each $F, E\in \textbf{A-mod}$ and  admissible monomorphism $T:F\rightarrow E$   the induced map $T_{J}:\ _{A}B(E,J)\rightarrow\ _{A}B(F,J)$ defined by $T_{J}(R)=R\circ T$ is onto.
\end{definition}
%%%%%%%%%%%%%%%%%%%%%%%%%%%%%%%%%%%%%%%%%%%%%
Suppose that $\phi\in \Delta(A)$. For $E\in \textbf{A-mod}$, put
\begin{align*}
\ I(\phi,E)&=\textrm{span}\{a\cdot \xi-\phi(\xi)a : a\in A, \xi\in E\},\\
\ _{\phi}B(A^{\sharp},E)&=\{T\in B(A^{\sharp},E) : T(ab-\phi(b)a)=a\cdot T(b-\phi(b)e^{\sharp}), \quad (a, b\in A)\}.
 \end{align*}
It is clear that $I(\phi,E)=\{0\}$ if and only if the module action of $E$ is given by $a\cdot x=\phi(a)x$ for all $a\in A$ and $x\in E$.

Obviously, $\ _{\phi}B(A^{\sharp},E)$ is a Banach subspace of $B(A^{\sharp},E)$. On the other hand, for each $b\in \ker(\phi)$, if $T\in\ _{\phi}B(A^{\sharp},E) $, then $T(ab)=a\cdot T(b)$ for all $a\in A$. Therefore, we conclude that $\ _{\phi}B(A^{\sharp},E)$ is a Banach left $A$-submodule of $B(A^{\sharp},E)$.
%%%%%%%%%%%%%%%%%%%%%%%%%%%%%%%%%%%%55

Note that if  $E, F\in \textbf{A-mod}$ and $\rho:E\rightarrow F$ is a left $A$-module homomorphism, we can extend the module actions of $E$ and $F$  from $A$ into $A^{\sharp}$ and $\rho$ to a left $A^{\sharp}$-module homomorphism in the following way:
\begin{align*}
&(a,\lambda)\cdot e=a\cdot e+\lambda e\hspace{0.5cm}(a\in A, \lambda \in \mathbb{C}, e\in E)\\
&(a,\lambda)\cdot f=a\cdot f+\lambda f\hspace{0.5cm}(a\in A, \lambda \in \mathbb{C}, f\in F).
\end{align*}
So, $\rho((a, \lambda)\cdot e)=a\cdot \rho(e)+\lambda \rho(e)=(a, \lambda)\cdot \rho(e)$.

For  Banach spaces $E$ and $F$, $T\in B(E,F)$  is admissible if and only if there exists $S\in
B(F,E)$ such that $T\circ S\circ T=T$.

The following definition of a \emph{$\phi$-injective} Banach module,  introduced  by Nasr-Isfahani and Soltani Renani in \cite{Nasr}.
\begin{definition}\label{dfn: char inj mod} Let $A$ be a Banach algebra, $\phi\in \Delta(A)$ and $J\in \textbf{A-mod}$. We say that $J$ is $\phi$-injective if  for each $F, E\in \textbf{A-mod}$ and  admissible monomorphism $T:F\rightarrow E$  with $I(\phi,E)\subseteq \textrm{Im}T$, the induced map $T_{J}$ is onto.
\end{definition}
By Definition \ref{dfn: inj mod} and \ref{dfn: char inj mod}, one can easily check that each injective module is  $\phi$-injective,  although  by \cite[Example 2.5]{Nasr}, the converse is not valid. In \cite{EFL}, the authors with use of the semigroup algebras, gave two good examples  of $\phi$-injective  Banach modules which they are not injective.
%%%%%%%%%%%%%%%%%%%%%%%%%%%%%%%%%%%%%%%%%%%%%%%%%%%%
%%%%%%%%%%%%%%%%%%%%%%%%%%%%%%%%%%%%%%%%%%%%%%

Let $E, F$ be in $\textbf{A-mod}$. An operator $T\in\ _{A}B(E,F)$ is called a \emph{retraction} if there exists an $S\in\ _{A}B(F,E)$ such that $T\circ S=Id_{F}$. In this case $F$ is called a retract of $E$. Also, an operator $T\in\ _{A}B(E,F)$ is called a \emph{coretraction} if there exists an $S\in\ _{A}B(F,E)$ such that  $S\circ T=Id_{E}$.

 For $E\in \textbf{A-mod}$, let $\ _{\phi}\Pi^{\sharp}:E\rightarrow\ _{\phi}B(A^{\sharp},E)$ be defined by $_{\phi}\Pi^{\sharp}(x)(a)=a\cdot x$ for all $a\in A^{\sharp}$ and $x\in E$. 

\begin{theorem}\cite[Theorem 2.4]{Nasr}\label{Th: Nasr} Let $A$ be a Banach algebra and $\phi\in \Delta(A)$. For $J\in \emph{\textbf{A-mod}}$ the following statements are equivalent.
\begin{enumerate}

  \item $J$ is $\phi$-injective.
  \item $\ _{\phi}\Pi^{\sharp}\in \ _{A}B(J,\ _{\phi}B(A^{\sharp},J))$ is a coretraction.

\end{enumerate}
\end{theorem}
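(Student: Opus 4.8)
The plan is to treat $_{\phi}B(A^{\sharp},J)$, equipped with the canonical morphism ${}_{\phi}\Pi^{\sharp}$, as a kind of \emph{$\phi$-cofree} module receiving $J$, in direct analogy with the classical fact that $J$ is injective precisely when the corresponding map $J\to B(A^{\sharp},J)$ is a coretraction. Accordingly, the first step is to record that ${}_{\phi}\Pi^{\sharp}$ is an admissible monomorphism: it is a bounded left $A$-module morphism of $J$ into $_{\phi}B(A^{\sharp},J)$, since $(ab-\phi(b)a)\cdot x=a\cdot\bigl(b\cdot x-\phi(b)x\bigr)$ places ${}_{\phi}\Pi^{\sharp}(x)$ in $_{\phi}B(A^{\sharp},J)$ and ${}_{\phi}\Pi^{\sharp}(c\cdot x)(a)=(ac)\cdot x=\bigl(c\cdot{}_{\phi}\Pi^{\sharp}(x)\bigr)(a)$ gives the module property; it is bounded below — hence a monomorphism with closed range — because ${}_{\phi}\Pi^{\sharp}(x)(e^{\sharp})=x$; and the bounded linear map $S_{0}\colon{}_{\phi}B(A^{\sharp},J)\to J$, $R\mapsto R(e^{\sharp})$, satisfies $S_{0}\circ{}_{\phi}\Pi^{\sharp}=\mathrm{Id}_{J}$, which in particular makes ${}_{\phi}\Pi^{\sharp}$ admissible.

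For $(1)\Rightarrow(2)$ I would feed $\phi$-injectivity of $J$ (Definition~\ref{dfn: char inj mod}) the admissible monomorphism ${}_{\phi}\Pi^{\sharp}\colon J\to{}_{\phi}B(A^{\sharp},J)$ itself; the only hypothesis of that definition requiring a genuine check is $I\bigl(\phi,{}_{\phi}B(A^{\sharp},J)\bigr)\subseteq\mathrm{Im}\,{}_{\phi}\Pi^{\sharp}$. For this, given $R\in{}_{\phi}B(A^{\sharp},J)$ and $a\in A$, I would set $x:=R(a-\phi(a)e^{\sharp})$ and verify the identity $a\cdot R-\phi(a)R={}_{\phi}\Pi^{\sharp}(x)$ by evaluating at an arbitrary $b\in A$ — where the defining relation $R(ba-\phi(a)b)=b\cdot R(a-\phi(a)e^{\sharp})$ of $_{\phi}B(A^{\sharp},J)$ does exactly what is needed — and separately at $e^{\sharp}$; since $\mathrm{Im}\,{}_{\phi}\Pi^{\sharp}$ is a closed subspace, it then absorbs all of $I(\phi,{}_{\phi}B(A^{\sharp},J))$. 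Definition~\ref{dfn: char inj mod} now forces $({}_{\phi}\Pi^{\sharp})_{J}\colon{}_{A}B({}_{\phi}B(A^{\sharp},J),J)\to{}_{A}B(J,J)$ to be onto, so some $\rho\in{}_{A}B({}_{\phi}B(A^{\sharp},J),J)$ satisfies $({}_{\phi}\Pi^{\sharp})_{J}(\rho)=\rho\circ{}_{\phi}\Pi^{\sharp}=\mathrm{Id}_{J}$; that is, ${}_{\phi}\Pi^{\sharp}$ is a coretraction.

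For $(2)\Rightarrow(1)$, start from $\rho\in{}_{A}B({}_{\phi}B(A^{\sharp},J),J)$ with $\rho\circ{}_{\phi}\Pi^{\sharp}=\mathrm{Id}_{J}$ and take an arbitrary admissible monomorphism $T\colon F\to E$ in \textbf{A-mod} with $I(\phi,E)\subseteq\mathrm{Im}\,T$, together with $R\in{}_{A}B(F,J)$; the aim is an $A$-module map $E\to J$ extending $R$ along $T$ (Definition~\ref{dfn: inj mod}). Choose $S\in B(E,F)$ with $TST=T$, which — $T$ being injective — improves to $ST=\mathrm{Id}_{F}$, and define $\theta\colon E\to B(A^{\sharp},J)$ by $\theta(e)(a)=R(S(a\cdot e))$, using the extended $A^{\sharp}$-action on $E$. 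Boundedness of $\theta$ is routine, and $\theta(c\cdot e)(a)=R(S((ac)\cdot e))=\theta(e)(ac)=(c\cdot\theta(e))(a)$ shows $\theta$ is a left $A$-module morphism. The step I expect to be the main obstacle is showing $\theta(e)\in{}_{\phi}B(A^{\sharp},J)$ for every $e\in E$: for $a,b\in A$ one has $\theta(e)(ab-\phi(b)a)=R(S(a\cdot y))$ with $y:=b\cdot e-\phi(b)e$, and the crux is that $y\in I(\phi,E)\subseteq\mathrm{Im}\,T$, so $y=T(f)$ for some $f\in F$, whence $ST=\mathrm{Id}_{F}$ and the module-morphism property of $R$ give $R(S(a\cdot y))=R(ST(a\cdot f))=a\cdot R(f)=a\cdot R(S(y))=a\cdot\theta(e)(b-\phi(b)e^{\sharp})$ — precisely the defining identity of $_{\phi}B(A^{\sharp},J)$, and exactly the place where the hypothesis $I(\phi,E)\subseteq\mathrm{Im}\,T$ is consumed. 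Finally the short computation $\theta(Tf)(a)=R(ST(a\cdot f))=R(a\cdot f)=a\cdot R(f)={}_{\phi}\Pi^{\sharp}(R(f))(a)$ gives $\theta\circ T={}_{\phi}\Pi^{\sharp}\circ R$, so $\rho\circ\theta\in{}_{A}B(E,J)$ satisfies $(\rho\circ\theta)\circ T=\rho\circ{}_{\phi}\Pi^{\sharp}\circ R=R$; hence $T_{J}$ is onto and $J$ is $\phi$-injective.
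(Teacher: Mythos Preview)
The paper does not actually prove this theorem; it is merely quoted from \cite[Theorem 2.4]{Nasr}. However, the paper does prove the analogous $\phi=0$ statement (the first Proposition of Section~3), and your argument is essentially a faithful $\phi$-version of that proof: in the forward direction you feed the definition the canonical embedding itself after checking $I(\phi,{}_{\phi}B(A^{\sharp},J))\subseteq\mathrm{Im}\,{}_{\phi}\Pi^{\sharp}$ (the paper checks the corresponding $A\cdot K\subseteq\mathrm{Im}\,{}_{0}\Pi$), and in the backward direction your map $\theta(e)(a)=R(S(a\cdot e))$ is precisely the paper's $R(k)(a)=W\circ T'(a\cdot k)$, with the verification that $\theta(e)\in{}_{\phi}B(A^{\sharp},J)$ consuming the hypothesis $I(\phi,E)\subseteq\mathrm{Im}\,T$ exactly as the paper's verification uses $A\cdot K\subseteq\mathrm{Im}\,T$. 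Your proof is correct and matches the paper's template.
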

%This paper is indeed a continuation of \cite{EFL2}.
%%%%%%%%%%%%%%%%%%%%%%%%%%%%%%%%%%%%%%%%%%%%%%
\section{0-injectivity of Banach modules}
In this section, we give the definition of  a \emph{0-injective} Banach left $A$-module and show that this class of Banach modules are strictly larger than the class of injective Banach modules.

For each $E\in \textbf{A-mod}$ define
  \begin{equation*}
  _{0}B(A^{\sharp},E)=\{T\in B(A^{\sharp},E) : T(ab)=a\cdot T(b)  \text{ for all }a, b\in A \}.
  \end{equation*}
  Clearly, $\ _{0}B(A^{\sharp},E)$ is a Banach left $A$-submodule of $B(A^{\sharp},E)$. It is well-known that $E^{*}$ is in $\textbf{mod-A}$ with the following module action:
  \begin{equation*}
  (f\cdot a)(x)=f(a\cdot x)\hspace{0.5cm}(a\in A, x\in E, f\in E^{*}).
  \end{equation*}
%========================================================
 \begin{definition} Let $A$ be a Banach algebra and $E\in \textbf{A-mod}$. We say that $E$ is (left) 0-injective if  for each $F, K\in \textbf{A-mod}$ and admissible monomorphism $T: F\rightarrow K$ for which $A\cdot K=\textrm{span}\{a\cdot k : a\in A, k\in K\}\subseteq \textrm{Im}T$, the induced map $T_{J}$ is onto.
 
 Similarly, one can define the concept of (right) 0-injective  $A$-module. We say that $E\in \textbf{A-mod}$ is 0-flat if $E^{*}\in \textbf{mod-A}$ is (right) 0-injective.
\end{definition}
Clearly, each injective module is 0-injective.

We use of the following characterization of 0-injectivity  in the sequel without giving the reference.
\begin{proposition} Let $A$ be a Banach algebra and $E\in\emph{\textbf{A-mod}}$. Then $E$ is 0-injective if and only if $\ _{0}\Pi^{\sharp}$ is a coretraction.
\end{proposition}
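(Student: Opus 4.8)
The proof will mirror that of Theorem~\ref{Th: Nasr}: the claim is the direct analogue, for the zero functional, of the characterization of $\phi$-injectivity, the only extra care being the adjoined identity. Throughout I extend all $A$-module actions and all $A$-module morphisms to $A^{\sharp}$ as in the preliminaries, and I record first that $\ _{0}\Pi^{\sharp}(x)$ does lie in $\ _{0}B(A^{\sharp},E)$ and that $\ _{0}\Pi^{\sharp}$ is a left $A$-module morphism, both being immediate from associativity of the action (for the second, using the action $(c\cdot T)(\xi)=T(\xi c)$ on $\ _{0}B(A^{\sharp},E)$).

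For the implication ``$E$ 0-injective $\Rightarrow$ $\ _{0}\Pi^{\sharp}$ a coretraction'', I would apply the definition of 0-injectivity to $F=E$, $K=\ _{0}B(A^{\sharp},E)$, $T=\ _{0}\Pi^{\sharp}$ and $R=\mathrm{Id}_{E}$. Three points must be checked about $\ _{0}\Pi^{\sharp}$. It is a monomorphism, which is immediate since $\ _{0}\Pi^{\sharp}(x)(e^{\sharp})=x$. It is admissible, for which $S\colon T\mapsto T(e^{\sharp})$ is a bounded linear left inverse, whence $\ _{0}\Pi^{\sharp}\circ S\circ\ _{0}\Pi^{\sharp}=\ _{0}\Pi^{\sharp}$. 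And, the key point, $A\cdot\ _{0}B(A^{\sharp},E)\subseteq\mathrm{Im}\,\ _{0}\Pi^{\sharp}$: this follows from the identity $c\cdot T=\ _{0}\Pi^{\sharp}(T(c))$ for $c\in A$ and $T\in\ _{0}B(A^{\sharp},E)$, a one-line computation that uses precisely the defining relation $T(ac)=a\cdot T(c)$. Then 0-injectivity yields $\rho\in\ _{A}B(\ _{0}B(A^{\sharp},E),E)$ with $\rho\circ\ _{0}\Pi^{\sharp}=\mathrm{Id}_{E}$, so $\ _{0}\Pi^{\sharp}$ is a coretraction.

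For the converse, assume $\rho\circ\ _{0}\Pi^{\sharp}=\mathrm{Id}_{E}$ with $\rho\in\ _{A}B(\ _{0}B(A^{\sharp},E),E)$, and let $F,K\in\textbf{A-mod}$, $T\colon F\to K$ an admissible monomorphism with $A\cdot K\subseteq\mathrm{Im}\,T$, and $R\in\ _{A}B(F,E)$; since $T$ is an admissible monomorphism it admits a bounded linear left inverse $S\colon K\to F$, $S\circ T=\mathrm{Id}_{F}$. I would define $\Psi\colon K\to\ _{0}B(A^{\sharp},E)$ by $\Psi(k)(\xi)=R\bigl(S(\xi\cdot k)\bigr)$ for $\xi\in A^{\sharp}$, with $\xi\cdot k$ the extended $A^{\sharp}$-action, and verify: (i) $\Psi(k)\in\ _{0}B(A^{\sharp},E)$, using that $b\cdot k\in\mathrm{Im}\,T$ together with injectivity of $T$ to get $S((ab)\cdot k)=a\cdot S(b\cdot k)$, and then that $R$ is a module morphism; (ii) $\Psi$ is bounded, with $\|\Psi\|\le\|R\|\,\|S\|$; (iii) $\Psi$ is a left $A$-module morphism, because $\xi\cdot(c\cdot k)=(\xi c)\cdot k$. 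Setting $\widetilde{R}:=\rho\circ\Psi\in\ _{A}B(K,E)$, the crux is $\Psi\circ T=\ _{0}\Pi^{\sharp}\circ R$: for $f\in F$ and $\xi\in A^{\sharp}$ one computes $\Psi(T(f))(\xi)=R(S(T(\xi\cdot f)))=R(\xi\cdot f)=\xi\cdot R(f)=\ _{0}\Pi^{\sharp}(R(f))(\xi)$, using that $T$ is an $A^{\sharp}$-module morphism and $S\circ T=\mathrm{Id}_{F}$. Hence $\widetilde{R}\circ T=\rho\circ\ _{0}\Pi^{\sharp}\circ R=R$, so the induced map is onto and $E$ is 0-injective.

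The one genuinely delicate step is arranging $\Psi$ so that $\Psi\circ T=\ _{0}\Pi^{\sharp}\circ R$ holds exactly, including agreement at the adjoined identity $e^{\sharp}$; this is what forces $\Psi$ to be built from the full extended action $\xi\cdot k$ on $K$ rather than merely from $a\cdot k$ with $a\in A$. Everything else is routine bookkeeping with the unitization, together with the fact that an admissible monomorphism splits only as a bounded linear map, not necessarily as a module map.
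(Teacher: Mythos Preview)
Your proof is correct and follows essentially the same route as the paper's: both directions use the same constructions---applying the definition with $F=E$, $K=\ _{0}B(A^{\sharp},E)$, $T=\ _{0}\Pi^{\sharp}$ for the forward implication, and building the map $\Psi(k)(\xi)=R(S(\xi\cdot k))$ (the paper writes $R(k)(a)=W\circ T'(a\cdot k)$) for the converse. You are in fact slightly more careful than the paper in explicitly verifying that $\ _{0}\Pi^{\sharp}$ is an admissible monomorphism before invoking the definition.
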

\begin{proof} Suppose $E\in \textbf{A-mod}$ is 0-injective. Take $F=E$, $K=\ _{0}B(A^{\sharp},E)$ and
$T=\ _{0}\Pi$. Then $A\cdot
K\subseteq\textrm{Im}(_{0}\Pi)$ and $a\cdot
T=\ _{0}\Pi(T(a))$  for each $a\in A$ and $T\in K$.
Hence, for the identity map $I_{E}\in
_{A}\hspace{-0.1cm}B(F,E)=_{A}\hspace{-0.1cm}B(E,E)$, there exists
$\rho\in_{A}\hspace{-0.1cm}B(K, E)=_{A}\hspace{-0.1cm}B(
_{0}B(A^{\sharp},E),E)$ such that $\rho\circ\: _{0}\Pi=\rho\circ
T=I_{E}$.

Conversely, let $_{0}\rho:\ _{0}B(A^{\sharp},E)\longrightarrow E$ be  a left $A$-module morphism and a left inverse for the canonical
morphism $_{0}\Pi$. Suppose that $F, K\in
\textbf{A-mod}$ and $T:F\longrightarrow K$ is an admissible
monomorphism such that  $A\cdot K\subseteq\mathrm{Im}T$. Let
$W\in_{A}\hspace{-0.1cm}B(F,E)$ and define the map
$R:K\longrightarrow\ _{0}B(A^{\sharp},E)$ by $$ R(k)(a)=W\circ
T^{\prime}(a\cdot k) \quad (k\in K, a\in A^{\sharp}),$$ where
$T^{\prime}\in B(K,F)$ satisfies $T\circ T^{\prime}\circ T=T$. We
show that $R$ is well defined, i.e., $R(k)\in\ _{0}B(A^{\sharp},E)$
for each $k\in K$. So, we will show that $R(k)(ab)=a\cdot R(k)(b)$
for each $a, b\in A$. By assumption $A\cdot K\subseteq
\mathrm{Im}T$ and so there exist $f\in F$ such that
$b\cdot k=T(f)$. Therefore
\begin{align*}
a\cdot R(k)(b)&=a\cdot W\circ T^{\prime}(b\cdot k)=a\cdot W\circ
T^{\prime}(T(f))\\
&=a\cdot W(f)=W(a\cdot f)\\
&=W\circ
T^{\prime}(T(a\cdot f))=W\circ T^{\prime}(ab\cdot k)\\
&=R(k)(ab).
\end{align*}
Moreover, for each
$b\in A^{\sharp}$ we have
\begin{align*}
R(a\cdot k)(b)&=W\circ T^{\prime}(b\cdot(a\cdot k))=W\circ T^{\prime}(ba\cdot k)\\
&=R(k)(ba)=(a\cdot R(K))(b).
\end{align*}
It follows that $R(a\cdot k)=a\cdot R(k)$. Now, take $S=\
_{0}\rho\circ R\in\ _{A}B(K, E)$. Since $R\circ T=\ _{0}\Pi\circ W$,
we conclude that $S\circ T=W$, which completes the proof.
\end{proof}
%%%%%%%%%%%%%%%%%%%%%%%%%%%%%%%%%%%%%%%%%
Now,  we  give a sufficient condition for 0-inectivity which provide for us  a large class of Banach algebras $A$ such that they are 0-injective in $\textbf{A-mod}$.

Recall that by \cite[Corollary 2.2.8(i)]{Ramsden}, if $A\in \textbf{A-mod}$ is injective, then $A$ has a right identity. Moreover, the converse is not valid in general even in the case that $A$ has an identity; see Example \ref{Ex}.

\begin{proposition}\label{Prop: sufiicient for 0-inj} Let $A$ be a Banach algebra. If $A$ has an identity, then $A\in \emph{\textbf{A-mod}}$ is 0-injective.
\end{proposition}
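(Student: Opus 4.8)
The plan is to use the characterization of $0$-injectivity given just above, namely that $E\in\textbf{A-mod}$ is $0$-injective if and only if $\ _{0}\Pi^{\sharp}$ is a coretraction. Thus, with $A$ regarded as a left module over itself by multiplication and $\ _{0}\Pi^{\sharp}(x)(a)=a\cdot x=ax$ for $x\in A$ and $a\in A^{\sharp}$, it suffices to construct a bounded left $A$-module morphism $\rho\colon\ _{0}B(A^{\sharp},A)\rightarrow A$ with $\rho\circ\ _{0}\Pi^{\sharp}=Id_{A}$.

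Let $e$ be the identity of $A$, which I also view as an element of $A^{\sharp}$, distinct in general from the adjoined identity $e^{\sharp}$ of $A^{\sharp}$. I would simply set $\rho(T)=T(e)$ for $T\in\ _{0}B(A^{\sharp},A)$; this is linear and bounded, with $\|\rho\|\le\|e\|$. For the left-inverse property, given $x\in A$ we have $\rho(\ _{0}\Pi^{\sharp}(x))=\ _{0}\Pi^{\sharp}(x)(e)=e\cdot x=ex=x$, so $\rho\circ\ _{0}\Pi^{\sharp}=Id_{A}$ immediately.

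The one step with real content is checking that $\rho$ respects the left $A$-module structures, and this is precisely where the identity of $A$ is used. Recall the action on $\ _{0}B(A^{\sharp},A)$ is $(a\cdot T)(b)=T(ba)$. Then for $a\in A$ and $T\in\ _{0}B(A^{\sharp},A)$, on the one hand $\rho(a\cdot T)=(a\cdot T)(e)=T(ea)=T(a)$; on the other hand, applying the defining relation $T(a'b')=a'\cdot T(b')$ of $\ _{0}B(A^{\sharp},A)$ with $a'=a$, $b'=e$ yields $T(a)=T(ae)=a\cdot T(e)=a\cdot\rho(T)$. Hence $\rho(a\cdot T)=a\cdot\rho(T)$, so $\rho\in\ _{A}B(\ _{0}B(A^{\sharp},A),A)$. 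Combined with the previous paragraph, $\ _{0}\Pi^{\sharp}$ is a coretraction, and the characterization of $0$-injectivity then gives the conclusion.

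I do not expect a genuine obstacle here: the verification is short provided one keeps the identity $e$ of $A$ and the adjoined identity $e^{\sharp}$ of $A^{\sharp}$ carefully distinct, and the fact that $\ _{0}\Pi^{\sharp}(x)$ actually lies in $\ _{0}B(A^{\sharp},A)$ is immediate from associativity of the product. The only conceptual point worth noting is that the recipe $\rho(T)=T(e)$ has no obvious replacement when $A$ has no identity, which is consistent with the remark above that injectivity of $A$ over itself already forces a one-sided identity, and with the fact that mere existence of an identity is not enough for injectivity (Example~\ref{Ex}).
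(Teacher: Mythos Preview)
Your proof is correct and follows essentially the same approach as the paper: define $\rho(T)=T(e)$ using the identity $e$ of $A$, then verify that $\rho$ is a left $A$-module left inverse of $\ _{0}\Pi^{\sharp}$ via the same two computations $\rho(a\cdot T)=T(ea)=T(a)$ and $a\cdot\rho(T)=a\cdot T(e)=T(ae)=T(a)$.
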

\begin{proof} Let $e$ be the identity of $A$. Define $\rho:$$_{0}B(A^{\sharp},A)\rightarrow A$ by $\rho(T)=T(e)$ for all $T\in\ _{0}B(A^{\sharp},A)$. It is obvious that $\rho$ is a left inverse for $_{0}$$\Pi^{\sharp}$, because for each $a\in A$, we have
\begin{align*}
\rho\circ\ _{0}\Pi^{\sharp}(a)=(_{0}\Pi^{\sharp}(a))(e)=ea=a.
\end{align*}
Also, $\rho$ is a left $A$-module morphism, because for each $a\in A$ and $T\in\  _{0}B(A^{\sharp},A)$ we have
\begin{align*}
&\rho(a\cdot T)=(a\cdot T)(e)=T(ea)=T(a)\\
&a\cdot \rho(T)=a\cdot T(e)=T(ae)=T(a).
\end{align*}
Therefore, $A\in \textbf{A-mod}$ is 0-injective.
\end{proof}
For each locally compact group $G$, let $M(G)$ be the Banach algebra consisting of all complex regular Borel measure of $G$ and let $L^{\infty}(G)$ be the space of all measurable complex-valued functions on $G$ which they are essentially bounded; see \cite{Dales} for more details.

The group $G$ is said to be \emph{amenable} if there exists an $m\in L^{\infty}(G)^{*}$ such that $m\geq 0$, $m(1)=1$ and $m(L_{x}f)=m(f)$ for each $x\in G$ and $f\in L^{\infty}(G)$, where $L_{x}f(y)=f(x^{-1}y)$.

As an application of the above theorem we give the following example which shows the difference between 0-injectivity and injectivity.
%=============================================================
\begin{example}\label{Ex} Let $G$ be a non-amenable locally compact group. Then by \cite[Theorem 3.1.2]{Ramsden}, $M(G)\in \textbf{M(G)-mod}$ is not injective, but it  is 0-injective. 
\end{example}
%%%%%%%%%%%%%%%%%%%%%%%%%%%%
%The following proposition is 0-injective version of \cite[Corollary 2.3]{Nasr}.
By \cite[Proposition VII.1.35]{Helemskii}, if $E\in \textbf{A-unmod}$, each retract of $E$ is injective. For 0-injective Banach modules we have the following proposition.
\begin{proposition}\label{retract} Let $A$ be a Banach algebra and let $E\in \emph{\textbf{A-mod}}$ be 0-injective. Then each retract of $E$ is also 0-injective.
\end{proposition}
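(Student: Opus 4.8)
The plan is to use the characterization of 0-injectivity in terms of the canonical morphism ${}_{0}\Pi^{\sharp}$ being a coretraction, which is available since the relevant Proposition precedes this statement. Suppose $F$ is a retract of $E$, so there are morphisms $T\in {}_{A}B(E,F)$ and $S\in {}_{A}B(F,E)$ with $T\circ S=\mathrm{Id}_{F}$. Since $E$ is 0-injective, fix a left $A$-module morphism ${}_{0}\rho_{E}\colon {}_{0}B(A^{\sharp},E)\to E$ with ${}_{0}\rho_{E}\circ {}_{0}\Pi^{\sharp}_{E}=\mathrm{Id}_{E}$. The goal is to manufacture from this data a left $A$-module morphism ${}_{0}\rho_{F}\colon {}_{0}B(A^{\sharp},F)\to F$ that is a left inverse of ${}_{0}\Pi^{\sharp}_{F}$.

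The key construction is to transport along $T$ and $S$. First I would observe that $S$ induces a map $S_{\ast}\colon {}_{0}B(A^{\sharp},F)\to {}_{0}B(A^{\sharp},E)$ by $S_{\ast}(U)=S\circ U$; this lands in ${}_{0}B(A^{\sharp},E)$ because $S$ is a module morphism, so $S\circ U(ab)=S(a\cdot U(b))=a\cdot(S\circ U)(b)$, and $S_{\ast}$ is itself a left $A$-module morphism for the pointwise actions. Then I would set
\begin{equation*}
{}_{0}\rho_{F}=T\circ {}_{0}\rho_{E}\circ S_{\ast}\colon {}_{0}B(A^{\sharp},F)\to F,
\end{equation*}
which is a composition of left $A$-module morphisms and hence a left $A$-module morphism. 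It remains to check it is a left inverse of ${}_{0}\Pi^{\sharp}_{F}$. For $y\in F$ one computes $S_{\ast}({}_{0}\Pi^{\sharp}_{F}(y))(a)=S(a\cdot y)=a\cdot S(y)={}_{0}\Pi^{\sharp}_{E}(S(y))(a)$, so $S_{\ast}\circ {}_{0}\Pi^{\sharp}_{F}={}_{0}\Pi^{\sharp}_{E}\circ S$; feeding this through ${}_{0}\rho_{E}$ and then $T$ gives ${}_{0}\rho_{F}\circ {}_{0}\Pi^{\sharp}_{F}(y)=T({}_{0}\rho_{E}({}_{0}\Pi^{\sharp}_{E}(S(y))))=T(S(y))=y$, using the left-inverse property of ${}_{0}\rho_{E}$ and then $T\circ S=\mathrm{Id}_{F}$. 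Hence ${}_{0}\Pi^{\sharp}_{F}$ is a coretraction and $F$ is 0-injective.

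The only point requiring a little care—though it is routine rather than a genuine obstacle—is verifying that $S_{\ast}$ is well defined, i.e.\ that $S\circ U$ genuinely belongs to ${}_{0}B(A^{\sharp},F)$ whenever $U\in {}_{0}B(A^{\sharp},F)$; this is exactly where one uses that $S$ commutes with the $A$-action, as indicated above. Everything else is a straightforward diagram chase, and there is no analytic content beyond boundedness of the composed maps, which is automatic. One should also note that the argument does not require $A$ to be unital and works verbatim in $\textbf{A-mod}$ with the extended $A^{\sharp}$-actions, so nothing is lost by phrasing it at this level of generality.
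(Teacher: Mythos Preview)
Your argument is correct and follows exactly the same route as the paper: you define the left inverse of ${}_{0}\Pi^{\sharp}_{F}$ by $W\mapsto T\bigl({}_{0}\rho_{E}(S\circ W)\bigr)$ and verify it via the identity $S_{\ast}\circ{}_{0}\Pi^{\sharp}_{F}={}_{0}\Pi^{\sharp}_{E}\circ S$ together with $T\circ S=\mathrm{Id}_{F}$, which is precisely the paper's computation. (One trivial slip: in your well-definedness remark $S\circ U$ should land in ${}_{0}B(A^{\sharp},E)$, not ${}_{0}B(A^{\sharp},F)$.)
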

\begin{proof} Let $F\in \textbf{A-mod}$ be a retract of $E$. Also, let $T\in\ _{A}B(E,F)$ and $S\in\ _{A}B(F,E)$ be such that $T\circ S=I_{F}$.

Since $E\in \textbf{A-mod}$ is 0-injective, there exists $_{E}\rho^{\sharp}\in\ _{A}B(_{0}B(A^{\sharp},E),E)$ for which $_{E}\rho^{\sharp}\circ\ _{E}\Pi^{\sharp}(x)=x$ for all $x\in E$.

Now, define the map $_{F}\rho^{\sharp}:\ _{0}B(A^{\sharp}, F)\rightarrow F$ by
\begin{equation*}
\ _{F}\rho^{\sharp}(W)=T\circ \ _{E}\rho^{\sharp}(S\circ W)\quad(W\in\ _{0}B(A^{\sharp}, F)).
\end{equation*}
It is straightforward to check that $_{F}\rho^{\sharp}$ is a left $A$-module morphism. On the other hand, for each $y\in F$ we have
\begin{align*}
_{F}\rho^{\sharp}\circ\ _{F}\Pi^{\sharp}(y)&=\ _{F}\rho^{\sharp}(\ _{F}\Pi^{\sharp}(y))\\
&=T\circ \ _{E}\rho^{\sharp}(S\circ \ _{F}\Pi^{\sharp}(y))\\
&=T\circ \ _{E}\rho^{\sharp}(\ _{E}\Pi^{\sharp}(S(y)))\\
&=T\circ S(y)=y.
\end{align*}
Therefore, $F\in \textbf{A-mod}$ is 0-injective.
\end{proof}
%%%%%%%%%%%%%%%%%%%%%%%%%%%%%%%%%%%%%%%%%%%%%%%
%+==theorem======================================================================================================
%%%%+==============================================================================================================
%%%%+==============================================================================================================
Now, we try to characterize the 0-injectivity of $L^{1}(G)$ in $\mathbf{L^{1}(G)}$\textbf{-mod}. First we give the following lemma. 
%%%%%%%%%%%%%%%%%%%%%%%%%%%%%%%%%
\begin{lemma}\label{Nec for 0-inj} Let $A$ be a Banach algebra and $E\in \emph{\textbf{A-mod}}$. If $E$ is 0-injective, then
\begin{equation*}
\ _{0}B(A^{\sharp},E)=\{T :  T=R_{x} \text{ on } A \text{ for some } x\in E\},
\end{equation*}
where $R_{x}a=a\cdot x$ for all $a\in A$.
\end{lemma}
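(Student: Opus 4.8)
The plan is to prove the two inclusions separately, with essentially all the work going into the inclusion $\subseteq$. The inclusion $\supseteq$ is immediate and does not use $0$-injectivity: given $x\in E$, the canonical map $_{0}\Pi^{\sharp}(x)$ already lies in $_{0}B(A^{\sharp},E)$, since $_{0}\Pi^{\sharp}(x)(ab)=ab\cdot x=a\cdot(b\cdot x)=a\cdot{}_{0}\Pi^{\sharp}(x)(b)$ for all $a,b\in A$, and clearly $_{0}\Pi^{\sharp}(x)(a)=a\cdot x=R_{x}a$ for $a\in A$, so $_{0}\Pi^{\sharp}(x)=R_{x}$ on $A$.

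For $\subseteq$, I would invoke the characterization of $0$-injectivity established in the Proposition above: since $E$ is $0$-injective, $_{0}\Pi^{\sharp}\colon E\to{}_{0}B(A^{\sharp},E)$ is a coretraction, so there is a left $A$-module morphism $_{0}\rho\colon{}_{0}B(A^{\sharp},E)\to E$ with $_{0}\rho\circ{}_{0}\Pi^{\sharp}=\mathrm{Id}_{E}$. Fix $T\in{}_{0}B(A^{\sharp},E)$ and put $x:={}_{0}\rho(T)\in E$; the goal is to show $Ta=a\cdot x$ for every $a\in A$. The key step is to identify, for each $a\in A$, the translate $a\cdot T\in{}_{0}B(A^{\sharp},E)$ (recall $(a\cdot T)(b)=T(ba)$ for $b\in A^{\sharp}$, and that $_{0}B(A^{\sharp},E)$ is an $A$-submodule of $B(A^{\sharp},E)$) with the element $_{0}\Pi^{\sharp}(Ta)$. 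Indeed, for $b\in A$ the defining relation of $_{0}B(A^{\sharp},E)$ gives $(a\cdot T)(b)=T(ba)=b\cdot T(a)={}_{0}\Pi^{\sharp}(Ta)(b)$, while on the adjoined identity $e^{\sharp}$ both sides equal $T(a)$; since $A^{\sharp}=A\oplus\mathbb{C}e^{\sharp}$ and both maps are linear, $a\cdot T={}_{0}\Pi^{\sharp}(Ta)$ on all of $A^{\sharp}$. Applying $_{0}\rho$ and using that it is a left $A$-module map and a left inverse of $_{0}\Pi^{\sharp}$,
\[
a\cdot x=a\cdot{}_{0}\rho(T)={}_{0}\rho(a\cdot T)={}_{0}\rho\bigl({}_{0}\Pi^{\sharp}(Ta)\bigr)=Ta,
\]
which is precisely the assertion $T=R_{x}$ on $A$.

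I do not expect a serious obstacle. The one point deserving care is that $a\cdot T$ and $_{0}\Pi^{\sharp}(Ta)$ must be shown to agree as operators on the whole of $A^{\sharp}$, not merely on $A$, since $_{0}\rho$ is only available on $_{0}B(A^{\sharp},E)$; this is handled by the short verification on $e^{\sharp}$ above together with linearity. The remaining manipulations are routine bookkeeping with the unitization $A^{\sharp}$ and the various module actions.
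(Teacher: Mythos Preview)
Your proof is correct and follows essentially the same route as the paper: both use the coretraction $_{0}\rho^{\sharp}$ furnished by $0$-injectivity, set $x={}_{0}\rho^{\sharp}(T)$, and derive $T(a)=a\cdot x$ from the key identity $a\cdot T={}_{0}\Pi^{\sharp}(T(a))$ combined with the module-morphism and left-inverse properties of $_{0}\rho^{\sharp}$. You are in fact more careful than the paper in verifying that identity on all of $A^{\sharp}$ (the paper simply writes $_{0}\Pi^{\sharp}(T(b))=b\cdot T$ without comment), and you also spell out the easy inclusion $\supseteq$, which the paper leaves implicit.
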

\begin{proof}
Let $E\in \textbf{A-mod}$ be 0-injective. So, there exists$\ _{0}\rho^{\sharp}\in\ _{A}B(_{0}B(A^{\sharp},E),E)$ with $_{0}\rho^{\sharp}\circ\ _{0}\Pi^{\sharp}(x)=x$ for all $x\in E$.

Let $T$ be an element of $\ _{0}B(A^{\sharp},E)$. Hence
\begin{align*}
T(b)=\ _{0}\rho^{\sharp}\circ\ _{0}\Pi^{\sharp}(T(b))&=\ _{0}\rho^{\sharp}(\ _{0}\Pi^{\sharp}(T(b)))\\
&=\ _{0}\rho^{\sharp}(b\cdot T)\\
&=b\cdot \ _{0}\rho^{\sharp}(T).
\end{align*}
Take $x_{0}=\ _{0}\rho^{\sharp}(T)$. So, $T=R_{x_{0}}$ on $A$ and this completes the proof.
\end{proof}
%%%%%%%%%%%%%%%%%%%%%%%%
Recall that $E\in \textbf{A-mod}$ is faithful in $A$, if for each $x\in E$,  the relation $a\cdot x=0$ for all $a\in A$, implies $x=0$.
\begin{theorem}\label{0-inj of L1} Let $G$ be a locally compact group. Then $L^{1}(G)\in \mathbf{L^{1}(G)}$\emph{\textbf{-mod}} is 0-injective if and only if $G$ is discrete.
\end{theorem}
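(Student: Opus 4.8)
The plan is to handle the two directions separately. The ``if'' direction is the easy one: if $G$ is discrete, then $L^{1}(G) = \ell^{1}(G)$ has an identity (the point mass $\delta_{e}$ at the unit of $G$), so Proposition~\ref{Prop: sufiicient for 0-inj} immediately gives that $L^{1}(G)$ is $0$-injective in $\mathbf{L^{1}(G)}$\textbf{-mod}.

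For the ``only if'' direction I would argue by contraposition, using Lemma~\ref{Nec for 0-inj} to produce an element of $\ _{0}B(L^{1}(G)^{\sharp}, L^{1}(G))$ that is not of the form $R_{x}$ on $L^{1}(G)$ for any $x \in L^{1}(G)$. First I would observe that $L^{1}(G)$ is faithful as a left $L^{1}(G)$-module (the module action is convolution, and convolution by an approximate identity recovers $f$), so for a candidate $T$ the representing element $x$ would be \emph{unique} if it existed. The natural candidate operator is convolution (on the right) by a suitable element $\mu$ of $M(G)$ that does \emph{not} lie in $L^{1}(G)$: define $T \colon L^{1}(G)^{\sharp} \to L^{1}(G)$ on $L^{1}(G)$ by $T(f) = f * \mu$ and extend it to the adjoined identity in any bounded linear way (the extension is irrelevant to the membership obstruction, since the defining relation $T(ab) = a \cdot T(b)$ only constrains $T$ on $A = L^{1}(G)$, and there it is automatic from associativity of convolution: $(f*g)*\mu = f*(g*\mu)$). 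The cleanest choice is $\mu = \delta_{t}$ for some $t \in G$; then $T(f) = f * \delta_{t} = R_{t^{-1}}f$ lands in $L^{1}(G)$ and $T \in \ _{0}B(L^{1}(G)^{\sharp}, L^{1}(G))$.

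The heart of the matter is then to show that when $G$ is \emph{not} discrete, this $T$ (with $t \neq e$, or more robustly with $\mu$ chosen singular with respect to Haar measure) is not equal to $R_{x}$ on $L^{1}(G)$ for any $x \in L^{1}(G)$: that is, right convolution by $\delta_{t}$ does not agree with left convolution $f \mapsto f * x = \int f(\cdot\, s^{-1}) x(s)\, ds$ for any $x \in L^{1}(G)$. I expect the main obstacle to be precisely this non-representability computation. One approach: if $f * \delta_{t} = f * x$ for all $f$, then $f * (\delta_{t} - x) = 0$ for all $f \in L^{1}(G)$; since $L^{1}(G)$ has a bounded approximate identity and $\delta_{t}-x \in M(G)$, right-multiplication faithfulness of $M(G)$ on $L^{1}(G)$ (or passing to the Fourier--Stieltjes / weak-* level) forces $\delta_{t} - x = 0$ in $M(G)$, which is impossible since $\delta_{t} \perp$ Haar measure when $G$ is nondiscrete while $x \in L^{1}(G)$ is absolutely continuous. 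By Lemma~\ref{Nec for 0-inj}, $L^{1}(G)$ is therefore not $0$-injective.

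An alternative, perhaps more self-contained, route to the obstruction avoids $M(G)$ altogether: take $T$ to be any element of $\ _{0}B(L^{1}(G)^{\sharp}, L^{1}(G))$ whose restriction to $L^{1}(G)$ is the identity on the range of the module action but fails the $R_{x}$-representability, e.g.\ built from a net but not a point evaluation; however I think the $\delta_{t}$ construction above is the most transparent. In writing this up I would state and use the standard fact that $M(G) = L^{1}(G)$ if and only if $G$ is discrete, so that the existence of a non-representable $T$ is exactly the failure of discreteness, closing the equivalence.
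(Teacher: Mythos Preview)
Your proposal is correct and follows essentially the same route as the paper: the ``if'' direction via Proposition~\ref{Prop: sufiicient for 0-inj}, and the ``only if'' direction by exhibiting right convolution by some $\mu\in M(G)\setminus L^{1}(G)$ as an element of $\ _{0}B(L^{1}(G)^{\sharp},L^{1}(G))$ that cannot equal any $R_{x}$, contradicting Lemma~\ref{Nec for 0-inj}. The paper simply picks any $\mu\in M(G)\setminus L^{1}(G)$ and invokes faithfulness of $M(G)$ over $L^{1}(G)$ in one line, whereas you spell out the approximate-identity argument and prefer the concrete choice $\mu=\delta_{t}$; note incidentally that $t=e$ already works when $G$ is non-discrete, since $\delta_{e}\notin L^{1}(G)$.
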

\begin{proof} Let $G$ be a discrete group. Then $L^{1}(G)$ is unital and so the result follows from Proposition \ref{Prop: sufiicient for 0-inj}.

Conversely, let $G$ be  non-discrete. So, $L^{1}(G)\neq M(G)$. Suppose that $\mu\in M(G)\setminus L^{1}(G)$. Since $L^{1}(G)$ is an ideal of $M(G)$, the operator $T_{\mu}$ defined by
\begin{equation*}
T_{\mu}((f, \lambda))=f\cdot \mu\quad((f,\lambda)\in L^{1}(G)^{\sharp}),
\end{equation*}
 is in $\ _{0}B(L^{1}(G)^{\sharp}, L^{1}(G))$, but it is not of the form $R_{x}$ for some $x\in L^{1}(G)$, because $M(G)$ is  faithful in $L^{1}(G)$. Therefore, by Lemma \ref{Nec for 0-inj}, $L^{1}(G)$ in $\mathbf{L^{1}(G)}$\textbf{-mod} is not 0-injective.
\end{proof}
%%%%%%%%%%%%%%%%%%%%%%%%%%%%%%%%%%
%Consider the Banach algebra $A=A_{\phi}(X)$ \cite[Example 2.5]{Nasr} with the product defined by $ab=\phi(a)b$ for all $a, b\in A$ where $X$ is a Banach space with $\dim(X)\geq 2$. By \cite[Example 2.5]{Nasr}, $A_{\phi}(X)\in \mathbf{A_{\phi}(X)}$\textbf{-mod} is $\phi$-injective. 

%By use of the following theorem and the Banach algebra $A_{\phi}(X)$, we can give an example of a Banach modules which it is $\phi$-injective but the are not 0-injective.
%\begin{theorem}
%\emph{$A_{\phi}(X)\in \mathbf{A_{\phi}(X)}$\textbf{-mod}} is 0-injective if and only if $\dim(X)=2$. 
%\end{theorem}
%\begin{proof}
%To see this one can easily check that $_{0}B(A_{\phi}(X)^{\sharp},A_{\phi}(X))=B(A_{\phi}(X)^{\sharp},A_{\phi}(X))$. Therefore the result follows from \cite[Proposition 3.6]{Ramsden Paper} and this fact that every injective module is 0-injective.
%\end{proof}
%%%%%%%%%%%%%%%%%%%%%%%%%%%%%%%%%%%%%%%
Recall that a Banach algebra $A$ is  left 0-amenable if for every Banach $A$-bimodule $X$ with $a\cdot x=0$ for all $a\in A$ and $x\in X$, every continuous derivation $D:A\rightarrow X^{*}$ is inner,  or equivalently, $H^{1}(A, X^{*})=0$ where  $H^{1}(A, X^{*})$ denotes the first cohomology group of $A$ with coefficients in $X^{*}$; see \cite{Nasr-Studia} for more details.

Now, we investigate the relation between 0-injectivity and 0-amenability.
%%%%%%%%%%%%%%%%%%%%%%%%%%%%%%%%%%%%%%%%%%

Let $E, F\in \textbf{A-mod}$. Suppose that  $Z^{1}(A\times E, F)$ denotes the Banach space of all continuous bilinear maps $B: A\times E\longrightarrow F$ satisfying 
\begin{equation*}
a\cdot B(b, \xi)-B(ab, \xi)+B(a, b\cdot \xi)=0\quad (a, b\in A, \xi\in E).
\end{equation*}
 Define $\delta_{0}: B(E, F)\longrightarrow Z^{1}(A\times E, F)$ by $(\delta_{0}T)(a, \xi)=a\cdot T(\xi)-T(a\cdot \xi)$ for all $a\in A$ and $\xi \in E$. Then we have
 \begin{equation*}
 \mathrm{Ext}_{A}^{1}(E, F)=Z^{1}(A\times E, F)/\textrm{Im}\delta_{0}.
 \end{equation*}
By \cite[Proposition VII.3.19]{Helemskii2}, we know that $\mathrm{Ext}_{A}^{1}(E, F)$ is topologically isomorphic to $H^{1}(A, B(E, F))$ where $B(E, F)$ is a Banach $A$-bimodule with the following module actions:
\begin{equation*}
(a\cdot T)(\xi)=a\cdot T(\xi),\quad (T\cdot a)(\xi)=T(a\cdot \xi)\hspace{0.5cm}(a\in A, \xi\in E, T\in B(E,F)).
\end{equation*}
 
 To see further details about $\mathrm{Ext}_{A}^{1}(E, F)$; see \cite{Helemskii}.

\begin{lemma}\label{Lemma}
Let  $E\in \emph{\textbf{A-mod}}$. If $\mathrm{Ext}_{A}^{1}(F,E)=\{0\}$ for all $F\in \emph{\textbf{A-mod}}$ with $A\cdot F=0$, then $E\in \emph{\textbf{A-mod}}$ is 0-injective.
\end{lemma}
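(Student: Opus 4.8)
The plan is to use the characterization of $0$-injectivity via the coretraction property of $_{0}\Pi^{\sharp}$. So the goal reduces to producing a left $A$-module morphism $_{0}\rho^{\sharp}\colon {}_{0}B(A^{\sharp},E)\to E$ with $_{0}\rho^{\sharp}\circ{}_{0}\Pi^{\sharp}=\mathrm{Id}_{E}$. The key observation is that an operator $T\in{}_{0}B(A^{\sharp},E)$ is exactly determined by its behaviour on $A$, and the defining relation $T(ab)=a\cdot T(b)$ says precisely that the bilinear map $(a,b)\mapsto a\cdot T(b)-T(ab)$ vanishes; this is the cocycle identity that appears in the definition of $Z^{1}(A\times F,E)$ once we build the right auxiliary module $F$.

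\textbf{Constructing the module $F$.} First I would form the Banach space $F$ that encodes an arbitrary $T\in{}_{0}B(A^{\sharp},E)$ modulo the ``inner'' part coming from $_{0}\Pi^{\sharp}$. Concretely, let $Q={}_{0}B(A^{\sharp},E)/{}_{0}\Pi^{\sharp}(E)$ and give $Q$ the trivial (zero) left $A$-module structure — this is legitimate because, as noted in the excerpt, $_{0}B(A^{\sharp},E)$ is an $A$-submodule and $_{0}\Pi^{\sharp}$ is a module morphism, but here I deliberately forget the action on the quotient so that $A\cdot F=0$. Then I would manufacture a cocycle $B\in Z^{1}(A\times Q,E)$: for $T\in{}_{0}B(A^{\sharp},E)$ with class $[T]\in Q$, one checks that $a\mapsto$ (something like) $T(a)$ combined with the quotient map gives a well-defined bilinear map whose coboundary defect is governed by the relation $T(ab)=a\cdot T(b)$, so the cocycle identity $a\cdot B(b,\xi)-B(ab,\xi)+B(a,b\cdot\xi)=0$ holds (the last term drops out since $b\cdot\xi=0$ in $F$). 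This is the step I expect to require the most care: choosing $F$ and $B$ so that $B$ genuinely lies in $Z^{1}(A\times F,E)$ and so that splitting $B$ yields exactly the desired left inverse.

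\textbf{Applying the hypothesis and finishing.} By hypothesis $\mathrm{Ext}_{A}^{1}(F,E)=\{0\}$ for this $F$ (since $A\cdot F=0$), which by definition means $Z^{1}(A\times F,E)=\mathrm{Im}\,\delta_{0}$. Hence the cocycle $B$ is a coboundary: there is $R\in B(F,E)$ with $B(a,\xi)=a\cdot R(\xi)-R(a\cdot\xi)=a\cdot R(\xi)$ for all $a\in A$, $\xi\in F$. Composing $R$ with the quotient map ${}_{0}B(A^{\sharp},E)\to F$ and subtracting off an appropriate correction, I would read off the required module morphism $_{0}\rho^{\sharp}$; the identity $B(a,\xi)=a\cdot R(\xi)$ translates precisely into $_{0}\rho^{\sharp}$ being a left $A$-module map, while the way $B$ was built from $T(a)$ forces $_{0}\rho^{\sharp}\circ{}_{0}\Pi^{\sharp}=\mathrm{Id}_{E}$. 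The main obstacle, as flagged, is the bookkeeping in the middle step — getting $F$, the quotient, and the cocycle to fit together so the splitting lands exactly on a coretraction for $_{0}\Pi^{\sharp}$ rather than merely some module map; everything before and after that is routine.
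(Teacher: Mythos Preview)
Your route is genuinely different from the paper's. The paper does not go through the coretraction characterization at all: it verifies the \emph{definition} of $0$-injectivity directly. Given an admissible monomorphism $T\colon K\to W$ with $A\cdot W\subseteq\mathrm{Im}\,T$, the cokernel $W/\mathrm{Im}\,T$ automatically has trivial $A$-action, so by hypothesis $\mathrm{Ext}_A^1(W/\mathrm{Im}\,T,E)=0$, and the long exact $\mathrm{Ext}$-sequence associated to the admissible complex $0\to K\to W\to W/\mathrm{Im}\,T\to 0$ (Helemskii, III~Theorem~4.4) gives surjectivity of $T_E$ in one line. Your plan instead fixes one particular admissible monomorphism, namely $_0\Pi^\sharp$, and unwinds the cocycle description of $\mathrm{Ext}^1$ by hand over the quotient $Q={}_0B(A^\sharp,E)/{}_0\Pi^\sharp(E)$. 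Conceptually this is the same long exact sequence applied to a single sequence, so the paper's argument is strictly shorter and avoids the bookkeeping you flag.

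Your sketch does contain a real gap at exactly the point you suspected. First, you need not ``deliberately forget'' the action on $Q$: the induced action is already zero, because $a\cdot T={}_0\Pi^\sharp(T(a))$ for every $a\in A$ and $T\in{}_0B(A^\sharp,E)$. Second, and more seriously, the cocycle ``$B(a,[T])=T(a)$'' is \emph{not} well-defined on $Q$: if $T={}_0\Pi^\sharp(x)$ then $T(a)=a\cdot x$, which is generally nonzero. The repair is to fix a bounded linear section $\sigma\colon Q\to{}_0B(A^\sharp,E)$ (it exists since evaluation at $e^\sharp$ is a bounded left inverse of $_0\Pi^\sharp$, so $_0\Pi^\sharp(E)$ is closed) and set $B(a,\xi)=\sigma(\xi)(a)$; this is the cocycle attached to the extension in the usual way. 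Writing $B=\delta_0 R$ and putting $_0\rho^\sharp(T)=(T-\sigma(qT))(e^\sharp)+R(qT)$ then gives the coretraction after a short check. So your strategy can be completed, but only after replacing the ill-defined $B$ by the section-based one; once that is done, you have essentially reproduced a special case of the paper's long-exact-sequence argument by hand.
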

\begin{proof}
 To show this,  let $K, W\in \textbf{A-mod}$ and $T:K\rightarrow W$ be an admissible monomorphism with $A\cdot W\subseteq \textrm{Im}T$. We claim that the induced map $T_{E}$  is onto.

We know that the short complex  $0\rightarrow K\xrightarrow{T} W\xrightarrow{q} \frac{W}{\textrm{Im}T}\rightarrow 0$ is admissible where $q$ is the quotient map. But for all $a\in A$ and $x\in W$, $a\cdot (x+\textrm{Im}T)=\textrm{Im}T$, because $A\cdot W\subseteq \textrm{Im}T$. Therefore, by assumption $\textrm{Ext}_{A}^{1}(\frac{W}{\textrm{Im}T},E)=\{0\}$. Now, by \cite[III Theorem 4.4]{Helemskii}, the  complex 
\begin{equation*}
0\rightarrow\ _{A}B(\frac{W}{\textrm{Im}T},E)\rightarrow\ _{A}B(W,E)\xrightarrow{T_{E}}\ _{A}B(K,E)\rightarrow \textrm{Ext}_{A}^{1}(\frac{W}{\textrm{Im}T},E)\rightarrow \cdots,
\end{equation*}
is exact. Therefore, $T_{E}$ is onto.

\end{proof}
%%%%%%%%%%%%%%%%%%%%%%%%%%%%%%%
Recall that if $E, F$ be two Banach spaces and $E\widehat{\otimes}F$ denotes the projective tensor product space, then $(E\widehat{\otimes}F)^{*}$ is isomorphic to $B(E, F^{*})$ as two Banach spaces with the pairing
\begin{equation*}
<Tx, y>=T(x\otimes y)\hspace{0.5cm}(x\in E, y\in F, T\in (E\widehat{\otimes}F)^{*}).
\end{equation*}
Also, note that $E\widehat{\otimes}F$ is  isometrically isomorphic to $F\widehat{\otimes}E$ as two Banach spaces. 
\begin{theorem}\label{Th: 0-amenable} Let $A$ be a Banach algebra. Then $A$ is left 0-amenable if and only if each $J\in \emph{\textbf{mod-A}}$ is 0-flat.
\end{theorem}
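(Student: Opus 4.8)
The statement to prove is: $A$ is left 0-amenable if and only if each $J \in \textbf{mod-A}$ is 0-flat. Recall that $J \in \textbf{mod-A}$ is 0-flat means $J^* \in \textbf{A-mod}$ is (left) 0-injective. So I want to connect 0-amenability (a vanishing-cohomology statement for modules with trivial left action) with the 0-injectivity of all duals $J^*$.

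The plan is to route everything through Lemma \ref{Lemma} and the $\mathrm{Ext}$–$H^1$ machinery. First I would prove the ``only if'' direction. Assume $A$ is left 0-amenable. Fix $J \in \textbf{mod-A}$; I must show $J^*$ is 0-injective. By Lemma \ref{Lemma}, it suffices to show $\mathrm{Ext}_A^1(F, J^*) = \{0\}$ for every $F \in \textbf{A-mod}$ with $A \cdot F = 0$. By \cite[Proposition VII.3.19]{Helemskii2}, $\mathrm{Ext}_A^1(F, J^*) \cong H^1(A, B(F, J^*))$. Now I use the dual identification in the excerpt: $(F \widehat{\otimes} J)^* \cong B(F, J^*)$ as Banach spaces, and this should be an isomorphism of Banach $A$-bimodules once $F \widehat{\otimes} J$ is given the appropriate $A$-bimodule structure. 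The key point is to check that when $A \cdot F = 0$, the left action of $A$ on the bimodule $B(F, J^*)$ — namely $(a \cdot T)(\xi) = a \cdot T(\xi)$ — matches, under the above isomorphism, the \emph{dual} of the right action on $F \widehat{\otimes} J$, and in particular that the predual module $X := F \widehat{\otimes} J$ satisfies $a \cdot x = 0$ for all $a \in A$, $x \in X$ (here the left $A$-action on $X$ coming from $F$ is trivial). Then 0-amenability gives $H^1(A, X^*) = 0$, hence $\mathrm{Ext}_A^1(F, J^*) = \{0\}$, and Lemma \ref{Lemma} finishes this direction.

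For the ``if'' direction, assume every $J \in \textbf{mod-A}$ is 0-flat and take an arbitrary Banach $A$-bimodule $X$ with $a \cdot x = 0$ for all $a, x$; I want $H^1(A, X^*) = 0$. The idea is to realize $X^*$ as $B(F, J^*)$ for a suitable choice making the left $A$-module $F$ trivial, so that $H^1(A, X^*) \cong \mathrm{Ext}_A^1(F, J^*)$, and then invoke the exact sequence from \cite[III Theorem 4.4]{Helemskii} together with 0-injectivity of $J^*$ — but one must produce the right admissible short complex. A cleaner route: take $J$ to be $X$ viewed suitably as a right module (or a tensor factor) and $F = \mathbb{C}$ with trivial action, noting $B(\mathbb{C}, J^*) \cong J^*$ as $A$-bimodules, so that $\mathrm{Ext}_A^1(\mathbb{C}_0, J^*) \cong H^1(A, J^*)$ where $\mathbb{C}_0$ is $\mathbb{C}$ with zero left action; 0-flatness of $J$ (i.e. 0-injectivity of $J^*$) then should force $\mathrm{Ext}_A^1(F, J^*) = \{0\}$ for all $F$ with $A \cdot F = 0$ via the $\mathrm{Ext}$ long exact sequence applied to $0 \to \ker \to A^\sharp \widehat{\otimes} F \to F \to 0$ or a similar resolution, and in particular for $F = \mathbb{C}_0$. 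Choosing $J$ so that every $X^*$ arises this way (e.g. $X^* = B(F, X^*)$ with $F = \mathbb{C}_0$, taking $J = X$ as a right $A$-module) yields $H^1(A, X^*) = 0$.

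The main obstacle I expect is the bookkeeping of module structures: verifying that the Banach-space isomorphism $(F \widehat{\otimes} J)^* \cong B(F, J^*)$ is genuinely an isomorphism of Banach $A$-bimodules with the actions as defined in the excerpt, and — crucially — that the hypothesis $A \cdot F = 0$ on the left translates into the predual bimodule having \emph{trivial left action} (so that left 0-amenability is exactly the applicable hypothesis, not ordinary amenability). A secondary subtlety is ensuring, in the converse direction, that the relevant short complex of $A$-modules is admissible so that \cite[III Theorem 4.4]{Helemskii} applies; this should follow because the complex splits as a complex of Banach spaces (e.g. $\mathbb{C}$ is complemented in $A^\sharp$), but it needs to be stated. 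Everything else is a formal chase through Lemma \ref{Lemma}, \cite[Proposition VII.3.19]{Helemskii2}, and \cite[III Theorem 4.4]{Helemskii}.
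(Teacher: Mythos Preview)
Your proposal is correct and follows essentially the same route as the paper: the forward direction uses Lemma \ref{Lemma} together with the identifications $\mathrm{Ext}_A^1(F, J^*) \cong H^1(A, B(F, J^*)) \cong H^1(A, (F \widehat{\otimes} J)^*)$ and the observation that $F \widehat{\otimes} J$ has trivial left $A$-action when $A \cdot F = 0$; the converse takes $F = \mathbb{C}$ with trivial action and $J = X$, reducing $H^1(A, X^*)$ to $\mathrm{Ext}_A^1(\mathbb{C}, J^*)$, which vanishes by 0-injectivity of $J^*$. The paper presents its converse as a single chain of equalities and, like your argument, tacitly uses the converse of Lemma \ref{Lemma} (that 0-injectivity of $E$ forces $\mathrm{Ext}_A^1(F, E) = 0$ whenever $A \cdot F = 0$, since any admissible extension $0 \to E \to W \to F \to 0$ then has $A \cdot W \subseteq E$ and hence splits).
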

\begin{proof} 
Suppose that $A$ is left 0-amenable. We show that $\textrm{Ext}_{A}^{1}(E,J^{*})=\{0\}$ for all $E\in\textbf{A-mod}$ with $A\cdot E=0$. We have
\begin{equation*}
\textrm{Ext}_{A}^{1}(E, J^{*})=H^{1}(A,B(E,J^{*}))=H^{1}(A, (E\widehat{\otimes}J)^{*})=\{0\},
\end{equation*}
because $E\widehat{\otimes}J\in \textbf{mod-A}$ has the module action, $a\cdot z=0$ for all $z\in E\widehat{\otimes}J$. Therefore,  by Lemma \ref{Lemma}, $J^{*}\in \textbf{A-mod}$ is 0-injective.

Conversely, let $J\in \textbf{mod-A}$ be 0-flat. So, for Banach right $A$-module $\mathbb{C}$ with module action $\lambda\cdot a=0$ for all $a\in A$ and  $\lambda\in \mathbb{C}$ we have
\begin{align*}
H^{1}(A, J^{*})=H^{1}(A, B(J, \mathbb{C}))&=H^{1}(A, B(J, \mathbb{C}^{*}))\\
&=H^{1}(A, (J\widehat{\otimes}\mathbb{C})^{*})\\
&=H^{1}(A, (\mathbb{C}\widehat{\otimes}J)^{*})\\
&=H^{1}(A, B(C,J^{*}))\\
&=\textrm{Ext}_{A}^{1}(\mathbb{C},J^{*})\\
&=0.
\end{align*}
Hence, if we take $J$ a left $A$ module with module action $a\cdot x=0$ for all $a\in A$ and $x\in J$, then the above relation implies that $A$ is 0-amenable.
\end{proof}
By \cite[Corollary 4.7]{Dales. Polyakov}, we know that $L^{1}(G)^{**}\in \mathbf{L^{1}(G)}$\textbf{-mod} is injective if and only if $G$ is an amenable group. Also, if $1<p<\infty$ by \cite[Theorem 9.6]{DDPR}, $L^{p}(G)\in \mathbf{L^{1}(G)}$\textbf{-mod} is injective if and only if $G$ is an amenable group.
\begin{corollary} Let $G$ be a locally compact group, $1<p<\infty$ and $E\in \mathbf{L^{1}(G)}$\emph{\textbf{-mod}} be  $L^{p}(G)$ or $L^{1}(G)^{**}$. Then $E\in \mathbf{L^{1}(G)}$\emph{\textbf{-mod}} is 0-injective.
\end{corollary}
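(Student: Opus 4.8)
The plan is to deduce the corollary directly from Theorem~\ref{Th: 0-amenable} together with the known amenability-free injectivity facts quoted just before the statement. The key observation is that both $L^{p}(G)$ for $1<p<\infty$ and $L^{1}(G)^{**}$ can be realized as dual modules: since $1<p<\infty$, we have $L^{p}(G)\cong L^{q}(G)^{*}$ where $q$ is the conjugate exponent, and $L^{q}(G)\in\mathbf{L^{1}(G)}$\textbf{-mod} in the obvious way; similarly $L^{1}(G)^{**}=(L^{1}(G)^{*})^{*}$ with $L^{1}(G)^{*}=L^{\infty}(G)\in\textbf{mod-}\mathbf{L^{1}(G)}$. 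So in each case $E=J^{*}$ for some appropriate one-sided module $J$.

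First I would recall that $L^{1}(G)$ is left $0$-amenable for \emph{every} locally compact group $G$: this follows because the defining module action on any $X$ with $a\cdot x=0$ is the trivial one, so every derivation $D:L^{1}(G)\to X^{*}$ is already inner (indeed $0$-amenability of any Banach algebra with a bounded approximate identity, in particular $L^{1}(G)$, is automatic; this is the content of the $\phi=0$ case of the results in \cite{Nasr-Studia}). Granting this, Theorem~\ref{Th: 0-amenable} gives that \emph{every} $J\in\textbf{mod-}\mathbf{L^{1}(G)}$ is $0$-flat, i.e.\ $J^{*}\in\mathbf{L^{1}(G)}$\textbf{-mod} is (left) $0$-injective. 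Applying this with $J=L^{q}(G)$ yields that $L^{p}(G)\cong L^{q}(G)^{*}$ is $0$-injective, and applying it with $J=L^{\infty}(G)$ yields that $L^{1}(G)^{**}=L^{\infty}(G)^{*}$ is $0$-injective.

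The one point that needs a short check is that the module-theoretic identifications $L^{p}(G)\cong L^{q}(G)^{*}$ and $L^{1}(G)^{**}\cong L^{\infty}(G)^{*}$ are isomorphisms of $\mathbf{L^{1}(G)}$-modules, not merely of Banach spaces --- i.e.\ that the dual module action coming from Theorem~\ref{Th: 0-amenable} coincides with the canonical left $L^{1}(G)$-action on $L^{p}(G)$ (convolution) and on $L^{1}(G)^{**}$ (the second Arens-type action induced from the $L^{1}$-module action on $L^{\infty}(G)$). This is a routine unravelling of the definition of the dual action $(f\cdot a)(x)=f(a\cdot x)$ and then dualizing once more, and I would dispatch it in a line or two.

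The main obstacle, if any, is purely bookkeeping: making sure the \emph{sides} match up correctly --- Theorem~\ref{Th: 0-amenable} takes a \emph{right} module $J$ and produces a \emph{left} $0$-injective module $J^{*}$, so one must start from $L^{q}(G)$ and $L^{\infty}(G)$ viewed as \emph{right} $L^{1}(G)$-modules and verify that their duals carry exactly the left module structures of $L^{p}(G)$ and $L^{1}(G)^{**}$ that appear in \cite{DDPR} and \cite{Dales. Polyakov}. Once the sidedness is pinned down, the corollary is immediate from Theorem~\ref{Th: 0-amenable}, with no further work and, notably, with no amenability hypothesis on $G$ --- which is precisely the contrast with the injectivity statements recalled above.
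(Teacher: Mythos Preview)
Your proposal is correct and follows essentially the same route as the paper: use that $L^{1}(G)$ has a bounded approximate identity, hence is left $0$-amenable by \cite[Proposition~3.4(i)]{Nasr-Studia}, and then apply Theorem~\ref{Th: 0-amenable} to the right modules $L^{\infty}(G)$ and $L^{q}(G)$ to conclude that their duals $L^{1}(G)^{**}$ and $L^{p}(G)$ are $0$-injective left $L^{1}(G)$-modules. Your additional remarks on checking that the dual module structures match and on keeping the sidedness straight are more careful than the paper's own proof, but do not constitute a different approach.
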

\begin{proof}
Since $L^{1}(G)$ has a bounded approximate identity by \cite[Proposition 3.4 (i)]{Nasr-Studia}, we know that $L^{1}(G)$ is 0-amenable. So, by Theorem \ref{Th: 0-amenable} we conclude the result. The second part follows similarly, because for each $1<p<\infty$ we know that $L^{q}(G)^{*}=L^{p}(G)$ where $q$ satisfies the relation $q^{-1}+p^{-1}=1$.
\end{proof}
%%%%%%%%%%%%%%
\begin{remark}
In general, by \cite[Proposition 3.4 (i)]{Nasr-Studia}, if $A$ is a Banach algebra with a bounded approximate identity, then each $E\in \textbf{mod-A}$ is 0-flat.
\end{remark}
%==========================================================
%%%%%%%%%%%%%%%%%%%%%%%%%%%%%%%%%%%%%%%%%%%%%%%%%%%%%
%====================Bibligraphy++++++++++++++++++++++++++++++++++++++++++++++++++++++++++++++++++++++++
%=============================++++++++++++++++++++++++++++++++++++++++++++++++++++++++++++++++++++++++
\bibliographystyle{amsplain}

\end{document}